\newtheorem{theorem}{Theorem}[section]
\newtheorem{lemma}[theorem]{Lemma}
\newtheorem{definition}[theorem]{Definition}
\newtheorem{proposition}[theorem]{Proposition}
\newenvironment{proof}[1][Proof]{\begin{trivlist}
\item[\hskip \labelsep {\bfseries #1}]}{\end{trivlist}}
\numberwithin{equation}{section}
\newcommand{\curl}[1]{\nabla\times #1}
\begin{document}
\title{A Global Uniqueness on Spherically Stratified Dielectric Medium in Time-Harmonic Maxwell Equation\\with Interior Transmission Eigenvalues}
\author{Lung-Hui Chen$^1$}\maketitle\footnotetext[1]{Department of
Mathematics, National Chung Cheng University, 168 University Rd.
Min-Hsiung, Chia-Yi County 621, Taiwan. Email:
mr.lunghuichen@gmail.com;\,lhchen@math.ccu.edu.tw. Fax:
886-5-2720497.}
\begin{abstract}
A set of regularly distributed transmission eigenvalues generates
a density function. We use such a density function to inversely
determine the form of the indicator function. Using the entire
function theory, we reduce an uniqueness problem with interior
transmission eigenvalues induced by time-harmonic Maxwell equation
to an uniqueness problem in entire function theory. In such an
inverse problem, the definite integral of the square root of
refraction index is the main parameter.
\\MSC:35P25/35R30/34B24/.
\\Keywords: Maxwell equation/inverse problem/non-self-adjoint Sturm-Liouville problem/ interior transmission eigenvalue
/Cartwright's theory/Wilder's theorem.
\end{abstract}
\section{Introduction and Preliminaries}
In this paper, we consider the time-harmonic Maxwell equation with
non-absorbing refraction index in the following setting:
\begin{equation} \label{1.1}
n(x)=n(r)>0,\, r=|x|,\,\mbox{ when }r\in[0,1];\, \Im n=0;\,
n(r)=1,\mbox{ when }r\geq 1;\,n\in\mathcal{C}^2[0,\infty);
\end{equation}
such that
\begin{eqnarray}\label{1.2}
\left\{%
\begin{array}{ll}
    \curl{E_1}-ikH_1=0, & \curl{H_1}+ikn(r)E_1=0,\mbox{ in }B;\\
    \curl{E_0}-ikH_0=0, & \curl{H_0}+ikE_0=0,\mbox{ in }B; \\
\end{array}%
\right.
\end{eqnarray}
with boundary condition
\begin{equation}\label{1.3}
\nu\times(E_1-E_0),\,\nu\times(H_1-H_0)=0,\mbox{ on }\partial B,
\end{equation}
where $E_0$, $H_0$ is an electromagnetic Herglotz pair, $B$ is an
open ball of radius $1$ in $\mathbb{R}^3$ with exterior unit
normal vector $\nu$.   We will look for a non-trivial solution to
this homogeneous electromagnetic interior transmission
problem~(\ref{1.2}) and~(\ref{1.3}). For each $k\in\mathbb{C}$
such that~(\ref{1.2}) and~(\ref{1.3}) has a set of non-trivial
solution is called an interior transmission eigenvalue. We reduce
such an electromagnetic interior transmission problem to the
acoustic interior transmission problem:
\begin{eqnarray}\label{1.4}
\left\{%
\begin{array}{ll}
    \Delta w+k^2n(r)w=0,  & \hbox{ in }B; \\
    \Delta v+k^2v=0 & \hbox{ in }B; \\
    w=v, \frac{\partial w}{\partial \nu}=\frac{\partial v}{\partial \nu}& \hbox{ on }\partial B,
\end{array}%
\right.
\end{eqnarray}
where $w,v\in\mathcal{C}^3(B)$. To see that, we consider the
following quantity
\begin{eqnarray}\label{1.5}
\left\{%
\begin{array}{ll}
    E_1(x):=\curl\{x w(x)\};\\
    H_1(x):=\frac{1}{ik}\curl\{E_1(x)\}; \\
    E_0(x):=\curl\{x v(x)\}; \\
    H_0(x):=\frac{1}{ik}\curl\{E_0(x)\},\\
\end{array}%
\right.
\end{eqnarray}
from which one can obtain a set of solution to the electromagnetic
interior transmission problem~(\ref{1.2}) and~(\ref{1.3}). We
refer the induction to the Colton and Kress \cite{Colton}.
\par
We need to consider the solutions $w,v$ to~(\ref{1.4}) that are
not spherically symmetric. Therefore, we look for non-trivial
solutions $w,v$ in the following form:
\begin{eqnarray}
&&v(r,\theta)=a_lj_l(kr)P_l(\cos\theta);\label{1.99}\\\label{1.10}
&&w(r,\theta)=b_l\frac{y_l(r)}{r}P_l(\cos\theta),
\end{eqnarray}
where $P_l$ is Legendre's polynomial, $j_l$ is the spherical
Bessel function of degree $l$, $a_l$ and $b_l$ depend on $k$ and
the function $y_l$ is a solution of
\begin{eqnarray}\label{1.8}
&&y_l''+(k^2n(r)-\frac{l(l+1)}{r^2})y_l=0,\,0<r<1,  \\
&&\lim_{r\rightarrow0}\{\frac{y_l(r)}{r}-j_l(k r)\}=0, \label{1.9}
\end{eqnarray}
where $y_l$ is continuous for $r\geq0$. Moreover, as demonstrated
in \cite{Colton}, we consider the non-spherically symmetric $w,v$.
In such a magnetic problem, we are asked to consider $l\geq1$.
Furthermore, we see from~(\ref{1.9}) that
\begin{equation} \label{110}
y_l(0)=0;\,y_l'(0)=0.
\end{equation}
We will show there exist a set of $k\in\mathbb{C}$ with its
maximal density and constants $a_l=a_l(k)$, $b_l=b_l(k)$, such
that~(\ref{1.99}) and~(\ref{1.10}) is a set of non-trivial
solution to the interior transmission problem~(\ref{1.4}).
Considering~(\ref{1.5}), we see that, for any such value of $k$,
the set of the electric far field patterns is not complete in
certain functional space. See the discussion in \cite{Colton}.

\par
The the interior transmission problem~(\ref{1.4}) and~(\ref{1.5})
admits  a set of non-trivial solution $v,w$ if there exists a set
of non-trivial solutions $a_l,b_l$ to the following homogenous
system
\begin{eqnarray} \label{1.11}
&&b_ly_l(1)-a_lj_l(k)=0;\\   \label{22}
&&b_l\frac{d}{dr}(\frac{y_l(r)}{r})|_{r=1}-a_lkj_l'(k)=0.
\end{eqnarray}
Such a system admits a set of non-trivial solutions
$a_l=a_l(k),b_l=b_l(k)$ if and only if the determinant
\begin{equation}\label{1.12}
d_l(k):=\det\left(%
\begin{array}{cc}
  y_l(1) & -j_l(k) \\
  \frac{d}{dr}(\frac{y_l(r)}{r})|_{r=1} & -kj_l'(k) \\
\end{array}%
\right)=0.
\end{equation}

\par
In this paper, we apply the following setting.
\begin{eqnarray}
&&l=1;\\
&&b_1(k)=b(k),\,a_1(k)=a(k);\\
&&d_1(k):=D(k),\,k\in\mathbb{C};\label{113}\\
&&y_1(r;k):=y(r;k),\,k\in\mathbb{C}.
\end{eqnarray}
Hence,
\begin{equation}
D(k)=(-k)y(1;k)j_1'(k)+y'(1;k)j_1(k)-y(1;k)j_1(k),
\end{equation}
where
\begin{equation}
j_1(t)=\frac{\sin t}{t^2}-\frac{\cos t}{t}.
\end{equation}
\par We consider in the spherical coordinate $(r,\theta,\vartheta)$ that
\begin{eqnarray}
&&\Phi(r;k):=b(k)y(r;k);\label{120}\\
&&\Phi_0(r;k):=a(k)rj_1(kr)\label{121}.
\end{eqnarray}
Hence, using~(\ref{110}),
\begin{eqnarray}
&&\Phi(0;k)=0;\label{122}\\
&&\partial_r\Phi(0;k)=b(k)\partial_ry(r;k)|_{\{r=0\}}=0;\label{123}
\end{eqnarray}
using~(\ref{1.11}) and~(\ref{22}),
\begin{eqnarray}
&&\Phi(1;k)=\Phi_0(1;k);\\
&&\partial_r\Phi(1;k)=b(k)\partial_ry(r;k)|_{\{r=1\}}=\partial_r\Phi_0(1;k).
\end{eqnarray}
Therefore, it is obviously that
\begin{eqnarray}\label{1.13}
\left\{%
\begin{array}{ll}
    \partial_{rr}\Phi+(k^2n(r)-\frac{2}{r^2})\Phi=0, &0<r<1; \\
     \partial_{rr}\Phi_0+(k^2-\frac{2}{r^2})\Phi_0=0, &0<r<1;\\
    \Phi(0)=\Phi_0(0)=0,\,\Phi(1)=\Phi_0(1),\,\partial_r\Phi(1)=\partial_r\Phi_0(1). \\
\end{array}%
\right.
\end{eqnarray}
We also observe that initial value
problem~(\ref{122}),~(\ref{123}) and the first equation
in~(\ref{1.13}) imply
\begin{equation}
\Phi(r;k)=y(r;k).
\end{equation}
This is due to the uniqueness of the initial value
problem~(\ref{1.8}) and~(\ref{1.9}) with $l=1$. Therefore,
$\Phi(r;k)$ is entire function of the same type and order as
$y(r;k)$.

\par
To find the estimates on the solution $y(r;k)$. We use the
Liouville transform.
\begin{eqnarray} \label{1.26}
&z(\xi;k):=[n(r)]^{\frac{1}{4}}y(r;k),\mbox{ where
}\xi:=\int_0^r[n(\rho)]^{\frac{1}{2}}d\rho.
\end{eqnarray}
Moreover, if we set
\begin{equation}
B:=\int_0^1[n(\rho)]^{\frac{1}{2}}d\rho,
\end{equation}
then
\begin{eqnarray}
\left\{%
\begin{array}{ll}
    z''+[k^2-p(\xi)]z=0; \\
    z(0)=0;\,(-k)z(B;k)j_1'(k)+z'(B;k)j_1(k)-z(B;k)j_1(k)=0, \\
\end{array}%
\right.
\end{eqnarray}
where
\begin{equation}\label{1.29}
p(\xi):=\frac{n''(r)}{4[n(r)]^2}-\frac{5}{16}\frac{[n'(r)]^2}{[n(r)]^3}+\frac{2}{r^2n(r)}.
\end{equation}
We rephrase the system above again.
\begin{eqnarray}\label{1.30}
\left\{%
\begin{array}{ll}
    z''+[k^2-q(\xi)-\frac{2}{\xi^2}]z=0; \\
    z(0)=0;\,D(k)=(-k)z(B;k)j_1'(k)+z'(B;k)j_1(k)-z(B;k)j_1(k)=0, \\
\end{array}%
\right.
\end{eqnarray}
where
\begin{equation}
q(\xi):=\frac{n''(r)}{4[n(r)]^2}-\frac{5}{16}\frac{[n'(r)]^2}{[n(r)]^3}+\frac{2}{r^2n(r)}-\frac{2}{\xi^2}.
\end{equation}
The fundamental estimates of its solution is found in Somasundaram
\cite[Lemma 5.5]{Somasundaram} which is based on the methods in
\cite{Po}. The (5.25) in \cite{Somasundaram} needs to be
considered subject to its (5.27) on page 45. In particular, we
need the following estimates to~(\ref{1.30}). For $|k|>1$,
\begin{eqnarray}\label{1.32}
z(\xi;k)&=&\frac{3\sin(k\xi)}{k^3\xi}-\frac{3\cos(k\xi)}{k^2}+O(\frac{\exp\{8\|q\|\sqrt{\xi}\}\exp\{|\Im k|\xi\}}{|k|^3});\\
z_\xi(\xi;k)&=&\frac{3\sin(k\xi)}{k^2}(\frac{-1}{k\xi^2}+k)+\frac{3\cos(k\xi)}{k^2\xi}+O(\frac{\exp\{8\|q\|\sqrt{\xi}\}\exp\{|\Im
k|\xi\}}{|k|^2}). \label{1.33}
\end{eqnarray}
They are bounded over $0i+\mathbb{R}$. We will apply Cartwright's
theory to such entire functions.

\par
May we ask that if the set of the interior transmission
eigenvalues of~(\ref{1.2}), in particular, the set of interior
transmission eigenvalues of the acoustic system~(\ref{1.4}) or
zeros of $D(k)$, can uniquely determine the refraction index
$n(r)$?

Following the local uniqueness results in \cite{Mc,Mc3}, we state
the uniqueness result in this paper.
\begin{theorem}\label{111}
Let the functional determinant $D^i(z)$, $i=1,2$, be defined by
refraction index $n^i(r)$ as in~(\ref{1.12}) and~(\ref{113}). If
the zeros of $D^i(z)$ inside any of the angular wedges
\begin{eqnarray}
&&\Sigma_1:=\{k\in\mathbb{C}|\,-\epsilon\leq\arg{k}\leq\epsilon\},\\
&&\Sigma_2:=\{k\in\mathbb{C}|\,\pi-\epsilon\leq\arg{k}\leq\pi+\epsilon\},\forall\epsilon>0,
\end{eqnarray}
coincide, then $n^1(r)\equiv n^2(r)$.
\end{theorem}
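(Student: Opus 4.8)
The plan is to convert the question entirely into entire-function theory: I will show that coincidence of the zero sets inside the wedges forces $D^1(k)\equiv D^2(k)$, and then invoke the local uniqueness results of \cite{Mc,Mc3} to pass from $D^1\equiv D^2$ to $n^1\equiv n^2$. First I would fix the analytic structure of the determinant. Since $\Phi(r;k)=y(r;k)$ is entire in $k$ of the same order and type as $z(\xi;k)$, I would substitute the Liouville asymptotics~(\ref{1.32})--(\ref{1.33}) together with $j_1(k)=\sin k/k^2-\cos k/k$ into
$$D(k)=(-k)\,z(B;k)\,j_1'(k)+z'(B;k)\,j_1(k)-z(B;k)\,j_1(k),$$
and collect the leading oscillatory parts. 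The products $z(B;k)j_1'(k)$ and $z'(B;k)j_1(k)$ individually carry exponential type $1+B$, but the specific combination defining $D$ telescopes the phases $kB$ and $k$ into the single phase $k(1-B)$, giving
$$D(k)=\frac{3\sin\big(k(1-B)\big)}{k^{2}}+(\text{lower-order terms}),\qquad|k|\to\infty.$$
A direct parity check on~(\ref{1.32})--(\ref{1.33}) shows $z(B;\cdot)$ and $z'(B;\cdot)$ are even while $j_1$ is odd and $j_1'$ even, so that $D(-k)=-D(k)$: the determinant is odd and its zeros are symmetric about the origin. Boundedness of $z,z_\xi$ on $0i+\mathbb{R}$ makes $D$ bounded on the real axis, hence of Cartwright class, with indicator diagram the vertical segment $[-i|1-B|,\,i|1-B|]$.

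Next I would bring in Cartwright's theory. Membership in the Cartwright class, together with finite exponential type, guarantees that all but a density-zero set of the zeros of $D$ are asymptotically real; concretely, for every $\epsilon>0$ only finitely many zeros lie outside $\Sigma_1\cup\Sigma_2$, and the linear density of the real zeros equals $|1-B|/\pi$ (matching the spacing $\pi/|1-B|$ of the zeros of $\sin(k(1-B))$). The hypothesis that the zeros of $D^1$ and $D^2$ agree inside $\Sigma_1$ (equivalently, by oddness, inside $\Sigma_2$) for every $\epsilon>0$ therefore accounts for \emph{all} sufficiently large zeros of both functions. Comparing densities gives $|1-B^1|=|1-B^2|$, and the finitely many small zeros are recovered by letting $\epsilon$ grow, so the two functions share the same zeros with multiplicity.

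Having matched the zero sets and the exponential type, I would apply Hadamard's factorization: each $D^i$ equals a common canonical product times $e^{g_i(k)}$ with $\deg g_i\le 1$. The Cartwright normalization—symmetry of the indicator segment about the real axis and boundedness of $D^i$ on $\mathbb{R}$—forces $g_i$ to be constant, whence $D^1(k)=c\,D^2(k)$. Reading off the normalized leading coefficient $3\sin(k(1-B^i))/k^2$ then fixes $B^1=B^2$ and $c=1$, so $D^1\equiv D^2$ on $\mathbb{C}$. Since $D^i$ is precisely the transmission determinant attached to $n^i$, the local uniqueness theorem of \cite{Mc,Mc3} applies to $D^1\equiv D^2$ and delivers $n^1(r)\equiv n^2(r)$.

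The main obstacle I anticipate is establishing, against the crude remainder bounds in~(\ref{1.32})--(\ref{1.33}), that the exponential type of $D$ is genuinely $|1-B|$ rather than $1+B$: the same cancellation that produces $\sin(k(1-B))$ in the leading term must be shown to persist in the Volterra remainders, for otherwise the error would dominate the main term along the imaginary directions and the indicator diagram—and hence the whole Cartwright argument—would collapse. A secondary delicate point is the reflection ambiguity $B\mapsto 2-B$, under which the leading zero pattern is invariant; ruling it out requires the finer asymptotic matching that comes with the full identity $D^1\equiv D^2$ rather than mere coincidence of zero densities, and confirming that the exponential Hadamard factor is constant rather than linear is exactly where the evenness of the remainder and the symmetry of the indicator diagram must be used with care.
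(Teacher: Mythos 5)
Your proposal collapses at its first analytic claim, and the difficulty you flag in your last paragraph as an ``anticipated obstacle'' is in fact fatal. The telescoping of phases into $3\sin\bigl(k(1-B)\bigr)/k^{2}$ occurs only between the two \emph{leading} oscillatory terms; the remainders in~(\ref{1.32})--(\ref{1.33}) are $O\bigl(e^{B|\Im k|}/|k|^{3}\bigr)$ and, after multiplication by $j_1(k),\,j_1'(k)\sim e^{|\Im k|}/|k|$, contribute errors of size $e^{(1+B)|\Im k|}/|k|^{3}$ to $D(k)$. These dominate the ``main term'' $e^{|1-B|\,|\Im k|}/|k|^{2}$ as soon as $|\Im k|\to\infty$, and nothing forces the cancellation to persist inside the Volterra remainders. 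Consequently the exponential type of $D$ is not $|1-B|$: the paper proves (equation~(\ref{2.48}) and Theorem~\ref{2.12}) that $h_{D}(\theta)=(1+B)|\sin\theta|$ and that the zero density in each wedge is $(1+B)/\pi$; only the \emph{real} zeros have density $|1-B|/\pi$, the remaining (complex, but still inside the wedges) zeros making up the difference. Two further steps are then unusable even if one repaired the type. First, Cartwright's theorem (Theorem~\ref{2.9}) says the zeros outside $\Sigma_1\cup\Sigma_2$ form a set of \emph{zero density}, not a finite set, so the hypothesis of Theorem~\ref{111} --- coincidence of zeros \emph{inside} the wedges --- never accounts for all zeros of $D^{1}$ and $D^{2}$; your Hadamard factorization step, which needs the full zero sets with multiplicity to agree, cannot start. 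Second, your closing move, ``$D^{1}\equiv D^{2}$ plus \cite{Mc,Mc3} gives $n^{1}\equiv n^{2}$,'' is not available as a black box: those references treat the $l=0$ problem, recover $n$ from eigenvalue data under extra hypotheses, and contain no statement that identity of the $l=1$ determinants implies identity of the refractive indices.

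The paper's proof is structured precisely to avoid these traps, and the contrast is instructive. It never attempts to show $D^{1}\equiv D^{2}$. Instead: (i) equality of the wedge densities $(1+B^{i})/\pi$ from Theorem~\ref{2.12} gives $B^{1}=B^{2}$ at once (no reflection ambiguity $B\mapsto 2-B$ arises, because the density is $1+B$, not $|1-B|$); (ii) it forms the \emph{difference of boundary values} $F(k)=y^{1}(1;k)-y^{2}(1;k)$, which vanishes on the common zero set and whose indicator satisfies $h_{F}(\theta)\le B^{1}|\sin\theta|$, so its indicator width $\le 2B^{1}$ is strictly smaller than $2(1+B^{1})$, the quantity carried by the density of the common zeros; (iii) Levin's generalized Carlson theorem (Theorem~\ref{C}) then forces $F\equiv 0$, and likewise $(y^{1})'(1;\cdot)\equiv(y^{2})'(1;\cdot)$; (iv) the zeros of $z^{i}(B;\cdot)$ and $(z^{i})'(B;\cdot)$ are the Dirichlet and Dirichlet--Neumann spectra of the Liouville-transformed potentials $p^{i}$, so McLaughlin's two-spectra theorem (Theorem~\ref{3.2}) yields $p^{1}\equiv p^{2}$, hence $z^{1}\equiv z^{2}$ and finally $n^{1}\equiv n^{2}$ by comparing $\int_0^r\sqrt{n^{i}}$. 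The essential idea you are missing is in step (ii): one plays the \emph{large} density of the common zero set against the \emph{small} exponential type of a difference function, rather than trying to reconstruct the determinants themselves from their wedge zeros.
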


\section{Counting the Zeros: Cartwright's Theorem}
From~(\ref{1.12}), we compute the $D(k)$ as follows.
\begin{equation}\label{2.1}
D(k)=(-k)z(B;k)j_1'(k)+z'(B;k)j_1(k)-z(B;k)j_1(k),
\end{equation}
where
\begin{equation}
j_1(z)=\frac{\sin z}{z^2}-\frac{\cos z}{z},
\end{equation}
which is an entire function of order $1$. Moreover, using the
asymptotics~(\ref{1.32}) and~(\ref{1.33}), we compute the
following asymptotics.
\begin{eqnarray}\nonumber
D(k)&=&(-k)z(B;k)[\frac{2\cos(k)}{k^2}+(\frac{1}{k}-\frac{2}{k^3})\sin{k}]
+z'(B;k)[\frac{\sin{k}}{k^2}-\frac{\cos{k}}{k}]-z(B;k)[\frac{\sin{k}}{k^2}-\frac{\cos{k}}{k}]\\
&=&(-k)z(B;k)[\frac{\cos{k}}{k^2}+(\frac{1}{k}-\frac{1}{k^3})\sin{k}]
+z'(B;k)[\frac{\sin{k}}{k^2}-\frac{\cos{k}}{k}]\\
&=& \frac{3\cos(k B)}{k}[\frac{\cos
k}{k^2}+(\frac{1}{k}-\frac{1}{k^3})\sin
k][1+O(\frac{1}{k})]\nonumber\\&&+\frac{3\sin(k B)}{k}[\frac{\sin
k}{k^2}-\frac{\cos k}{k}][1+O(\frac{1}{k})],\,\forall k\notin
0i+\mathbb{R}, \label{2.4}
\end{eqnarray}
where we have used the fact that $\tan z$ and $\cot z$ are bounded
outside $0i+\mathbb{R}$. \par Moreover, we define
\begin{eqnarray} \nonumber
\mathcal{D}(k)&:=&\frac{k^4D(k)}{3}\\
&=&\cos(Bk)\cos k\{[k+(k^2-1)\tan k][1+O(\frac{1}{k})]\\&&+\tan(B
k)[k\tan k-k^2](1+O(\frac{1}{k}))\},\,\forall k\notin
0i+\mathbb{R}. \label{2.5}
\end{eqnarray}

\par
For such a representation form of an entire function, we consider
one type of the theorems concerning the distribution of the zeros
of certain class of entire functions. We apply the Cartwright's
theory. We refer the Cartwright's theory to the Levin's book
\cite{Levin,Levin2} and \cite{Cartwright, Cartwright2}. Let us
review the following verbatim.
\begin{definition}
Let $f(z)$ be an entire function. Let
$M_f(r):=\max_{|z|=r}|f(z)|$. An entire function of $f(z)$ is said
to be a function of finite order if there exists a positive
constant $k$ such that the inequality
\begin{equation}
M_f(r)<e^{r^k}
\end{equation}
is valid for all sufficiently large values of $r$. The greatest
lower bound of such numbers $k$ is called the order of the entire
function $f(z)$. By the type $\sigma$ of an entire function $f(z)$
of order $\rho$, we mean the greatest lower bound of positive
number $A$ for which asymptotically we have
\begin{equation}
M_f(r)<e^{Ar^\rho}.
\end{equation}
That is
\begin{equation}
\sigma:=\limsup_{r\rightarrow\infty}\frac{\ln M_f(r)}{r^\rho}.
\end{equation}  If $0<\sigma<\infty$, then we say
$f(z)$ is of normal type or mean type.
\end{definition}
We also see that
\begin{equation}
e^{(\sigma-\epsilon)r^\rho}\underset{{\rm
n}}{<}M_f(r)\underset{{\rm as}}{<}e^{(\sigma+\epsilon)r^\rho},
\end{equation}
where the first inequality holds for some sequence going to
infinity and the second one holds asymptotically.

\begin{definition}
If an entire function $f(z)$ is of order one and of normal type,
then we say it is an entire function of exponential type $\sigma$.
\end{definition}

\begin{definition}
Let $\rho\in\mathbb{R}$ and
$\rho(r):\mathbb{R}^+\rightarrow\mathbb{R}^+$. We say $\rho(r)$ is
a proximate order to $\rho$ if
\begin{equation}
\lim_{r\rightarrow\infty}\rho(r)=\rho\geq0;\,\lim_{r\rightarrow\infty}r\rho'(r)\ln
r=0.
\end{equation}
\end{definition}
\begin{definition}
Let $f(z)$ be an integral function of finite order in the angle
$[\theta_1,\theta_2]$. We call the following quantity as the
generalized indicator of the function $f(z)$.
\begin{equation} \label{2.45}
h_f(\theta):=\limsup_{r\rightarrow\infty}\frac{\ln|f(re^{i\theta})|}{r^{\rho(r)}},\,\theta_1\leq\theta\leq\theta_2,
\end{equation}
where $\rho(r)$ is some proximate order.
\end{definition}
The order and the type of an integral function in an angle can be
defined similarly. The connection between the indicator
$h_f(\theta)$ and its type $\sigma$ is specified by the following
theorem.
\begin{lemma}[Levin \cite{Levin},\,p.72]\label{2.5}
The maximum value of the indicator $h_f(\theta)$ of the function
$f(z)$ on the interval $\alpha\leq\theta\leq\beta$ is equal to the
type $\sigma_f$ of this function inside the angle $\alpha\leq\arg
z\leq\beta$.
\end{lemma}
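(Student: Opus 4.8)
The plan is to establish the two inequalities $\max_{\alpha\le\theta\le\beta}h_f(\theta)\le\sigma_f$ and $\sigma_f\le\max_{\alpha\le\theta\le\beta}h_f(\theta)$ separately. Write $H:=\max_{\alpha\le\theta\le\beta}h_f(\theta)$ and recall that the type inside the angle is $\sigma_f=\limsup_{r\to\infty}r^{-\rho(r)}\ln M_f(r)$ with $M_f(r):=\max_{\alpha\le\theta\le\beta}|f(re^{i\theta})|$. The first inequality is immediate: for every fixed $\theta\in[\alpha,\beta]$ one has $|f(re^{i\theta})|\le M_f(r)$, so dividing by $r^{\rho(r)}$ and taking the upper limit gives $h_f(\theta)\le\sigma_f$, and maximizing over $\theta$ yields $H\le\sigma_f$. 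This uses nothing beyond the definition~(\ref{2.45}).

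The reverse inequality $\sigma_f\le H$ is the substance of the lemma, and the step I expect to be hardest. The difficulty is that $M_f(r)$ is attained at an angle $\theta_r$ that may wander with $r$, so the pointwise bounds $h_f(\theta)\le H$ do not by themselves control $M_f(r)$: for an arbitrary family one only has $\limsup_r\max_\theta(\cdot)\ge\max_\theta\limsup_r(\cdot)$, and the inequality can be strict when the modulus concentrates in ever-thinner moving sectors (a ``moving spike''). What rules this out is the regularity of the indicator. Since $\ln|f(z)|$ is subharmonic and $f$ is of finite order, $h_f(\theta)$ is $\rho$-trigonometrically convex on $[\alpha,\beta]$, and in particular continuous there. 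I would establish this convexity by the Phragm\'en--Lindel\"of principle: on any subsector $[\theta_1,\theta_3]$ with $\theta_3-\theta_1\le\pi/\rho$, the function $\ln|f(re^{i\theta})|-r^{\rho(r)}(A\cos\rho\theta+B\sin\rho\theta)$, with $A,B$ chosen to interpolate $h_f$ at the endpoints, obeys a maximum principle, from which the convexity inequality bounding $h_f(\theta_2)$ follows.

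With continuity of $h_f$ in hand, the core remaining task is to upgrade the pointwise-in-$\theta$ estimates to one uniform in $\theta$: for each $\epsilon>0$ to produce $R$ with
\begin{equation}
\ln|f(re^{i\theta})|\le (H+C\epsilon)\,r^{\rho(r)},\qquad r>R,\ \theta\in[\alpha,\beta],
\end{equation}
for an absolute constant $C$. I would argue by compactness. Choose finitely many angles $\alpha=\theta_0<\theta_1<\cdots<\theta_N=\beta$ so fine that consecutive ones are separated by less than $\pi/\rho$ and that $h_f$ varies by less than $\epsilon$ across each subinterval (possible by uniform continuity of $h_f$ on the compact set $[\alpha,\beta]$). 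At each $\theta_j$ the definition of the indicator furnishes $R_j$ with $\ln|f(re^{i\theta_j})|\le(h_f(\theta_j)+\epsilon)r^{\rho(r)}$ for $r>R_j$; taking $R=\max_jR_j$ makes this valid simultaneously on the finitely many rays $\arg z=\theta_j$. On each subsector $\theta_j\le\arg z\le\theta_{j+1}$ the subharmonic function $\ln|f|$ is then controlled on the two bounding rays, and a Phragm\'en--Lindel\"of comparison against $r^{\rho(r)}(A\cos\rho\theta+B\sin\rho\theta)$ propagates the bound into the interior, where the proximate-order correction contributes only a term $o(r^{\rho(r)})$. Maximizing the displayed inequality over $\theta$ gives $\ln M_f(r)\le(H+C\epsilon)r^{\rho(r)}$ for $r>R$, hence $\sigma_f\le H+C\epsilon$; letting $\epsilon\downarrow0$ yields $\sigma_f\le H$. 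Combined with the first inequality this proves $\sigma_f=\max_{\alpha\le\theta\le\beta}h_f(\theta)$, as asserted.
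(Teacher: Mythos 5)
The paper offers no proof of this lemma at all: it is quoted as a known result from Levin (p.~72), so the ``paper's proof'' is simply that citation. Your sketch reproduces the standard argument from Levin's book itself --- the trivial inequality $\max_{\alpha\le\theta\le\beta}h_f(\theta)\le\sigma_f$ straight from the definitions, and the converse by upgrading the pointwise indicator bounds to a bound uniform in $\theta$ via trigonometric convexity (hence continuity) of $h_f$, a finite covering of $[\alpha,\beta]$, and Phragm\'en--Lindel\"of propagation into each subsector --- and it is correct in outline, modulo the standard technicalities you yourself flag (comparison functions adapted to a general proximate order, and the degenerate case $h_f\equiv-\infty$), both of which are handled in Levin.
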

\begin{lemma}\label{26}
Let $a$, $b$ be real constants.
\begin{eqnarray}
&&h_{\sin\{az+b\}}(\theta)=|a\sin\theta|; \label{2.39}
\end{eqnarray}
if $p(z)$ is a polynomial with bounded holomorphic coefficients,
then
\begin{equation}
h_{p(z)}(\theta)=0.
\end{equation}
\end{lemma}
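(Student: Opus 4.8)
Both identities are indicator computations, so the first thing I would fix is the proximate order. Since $\sin(az+b)$ and any polynomial are of order one, I may take $\rho(r)\to 1$ with $r^{\rho(r)}=r\,(1+o(1))$, so that the defining $\limsup$ in~(\ref{2.45}) over $r^{\rho(r)}$ agrees with the same $\limsup$ taken over $r$. Thus throughout I would compute
\[
h_f(\theta)=\limsup_{r\to\infty}\frac{\ln|f(re^{i\theta})|}{r}.
\]

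\textbf{Part 1 (the sine factor).} I would write $z=re^{i\theta}$ and, using that $a,b$ are real, split $az+b=u+iv$ with $u:=ar\cos\theta+b$ and $v:=ar\sin\theta$ both real. The key algebraic step is the elementary identity
\[
|\sin(u+iv)|^2=\sin^2u\,\cosh^2v+\cos^2u\,\sinh^2v=\sin^2u+\sinh^2v,
\]
obtained from $\sin(u+iv)=\sin u\cosh v+i\cos u\sinh v$ together with $\cosh^2v=1+\sinh^2v$. When $\sin\theta\neq0$ we have $|v|=r|a\sin\theta|\to\infty$, so $\sinh^2v\sim\tfrac14 e^{2|v|}$ dominates the bounded term $\sin^2u$, giving $|\sin(az+b)|\sim\tfrac12\exp\{r|a\sin\theta|\}$; dividing $\ln|\sin(az+b)|$ by $r$ and letting $r\to\infty$ yields exactly $|a\sin\theta|$, which is~(\ref{2.39}).

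\textbf{Part 2 (the polynomial factor).} Writing $p(z)=\sum_{j=0}^{n}c_j(z)z^j$ with $\sup_z|c_j(z)|\leq M$, I would bound $|p(z)|\leq M(n+1)\max(1,|z|^n)$, so that for $r\geq1$,
\[
\frac{\ln|p(re^{i\theta})|}{r}\leq\frac{\ln\bigl(M(n+1)\bigr)+n\ln r}{r}\xrightarrow[r\to\infty]{}0,
\]
giving $h_{p}(\theta)\leq0$; the reverse inequality follows because along each ray the leading term forces $|p(re^{i\theta})|\sim c\,r^{n}$ for large $r$, whence $\ln|p|/r\to0^{+}$ and the $\limsup$ equals $0$.

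\textbf{Main obstacle.} The only delicate point is the boundary case $\sin\theta=0$ (i.e. $\theta=0,\pi$), which is precisely the directions singled out in Theorem~\ref{111}. There $v=0$ and $|\sin(az+b)|=|\sin u|\leq1$, so $\ln|\sin(az+b)|\leq0$; since $\sin u$ attains values bounded away from $0$ (indeed equal to $1$) along a sequence $r\to\infty$, the $\limsup$ is $0$, matching $|a\sin\theta|=0$. The care needed is to use $\limsup$ rather than $\lim$, so that the zeros of $\sin u$ accumulating on the real axis do not drag the indicator to $-\infty$; this is the step I expect to require the most attention, and it is exactly what makes the real directions $\Sigma_1,\Sigma_2$ the critical rays for the zero-counting in the next section.
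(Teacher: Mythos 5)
Your proposal is correct and takes essentially the same approach as the paper: the paper's entire proof of this lemma is the one-line appeal ``We apply definition~(\ref{2.45})'', and your computation via $|\sin(u+iv)|^2=\sin^2 u+\sinh^2 v$ for the sine factor, the logarithmic bound $\ln|p(re^{i\theta})|/r\to 0$ for the polynomial factor, and the $\limsup$ treatment of the rays $\theta=0,\pi$ is exactly the detail that this appeal to the definition leaves implicit. Nothing further is needed.
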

\begin{proof}
We apply definition~(\ref{2.45}), we prove the lemma. $\Box$
\end{proof}
We mention two more inequalities for indicator functions.
\begin{lemma}\label{27}
Let $f$, $g$ be two entire functions. Then, the following two
inequalities hold.
\begin{eqnarray}\label{2.46}
&&h_{fg}(\theta)=h_{f}(\theta)+h_g(\theta),\mbox{ if one limit exists};\\
&&h_{f+g}(\theta)\leq\max_\theta\{h_f(\theta),h_g(\theta)\},
\label{2.47}
\end{eqnarray}
where if the indicator of the two summands are not equal at some
$\theta_0$, then the equality holds in~(\ref{2.47}).
\end{lemma}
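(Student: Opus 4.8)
The plan is to derive both inequalities directly from the defining limit superior in~(\ref{2.45}) together with three elementary facts about limits superior of real sequences: that $\limsup(a_r+b_r)\leq\limsup a_r+\limsup b_r$ in general, with equality whenever one of the two limits exists; that $\limsup\max\{a_r,b_r\}=\max\{\limsup a_r,\limsup b_r\}$; and that $\ln\max\{|f|,|g|\}=\max\{\ln|f|,\ln|g|\}$. Throughout I would keep $\theta$ fixed and abbreviate $a_r:=\ln|f(re^{i\theta})|/r^{\rho(r)}$ and $b_r:=\ln|g(re^{i\theta})|/r^{\rho(r)}$, so that by~(\ref{2.45}) we have $h_f(\theta)=\limsup_{r\to\infty}a_r$ and $h_g(\theta)=\limsup_{r\to\infty}b_r$.

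For the product formula~(\ref{2.46}), the starting point is the exact identity $\ln|f(re^{i\theta})g(re^{i\theta})|=\ln|f(re^{i\theta})|+\ln|g(re^{i\theta})|$, valid wherever neither factor vanishes. Dividing by $r^{\rho(r)}$ gives $\ln|fg(re^{i\theta})|/r^{\rho(r)}=a_r+b_r$. Passing to the limit superior, subadditivity of $\limsup$ yields $h_{fg}(\theta)\leq h_f(\theta)+h_g(\theta)$; the reverse inequality, and hence the asserted equality, follows precisely under the hypothesis that one of the two constituent limits exists, since in that case the additive limit can be pulled out of the $\limsup$. This is exactly the role of the clause ``if one limit exists'' in the statement.

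For the subadditivity~(\ref{2.47}), I would begin with the triangle inequality $|f+g|\leq|f|+|g|\leq 2\max\{|f|,|g|\}$, take logarithms to obtain $\ln|f+g|\leq\ln 2+\max\{\ln|f|,\ln|g|\}$, and divide by $r^{\rho(r)}$. Since the order is positive (here order one), $r^{\rho(r)}\to\infty$ and the contribution of $\ln 2$ vanishes in the limit. Using the identity $\max\{\ln|f|,\ln|g|\}/r^{\rho(r)}=\max\{a_r,b_r\}$ together with $\limsup\max\{a_r,b_r\}=\max\{\limsup a_r,\limsup b_r\}$, I obtain $h_{f+g}(\theta)\leq\max\{h_f(\theta),h_g(\theta)\}$, which is~(\ref{2.47}).

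Finally, for the equality clause, suppose at some $\theta_0$ the two indicators differ, say $h_f(\theta_0)>h_g(\theta_0)$. The trick is to apply the inequality just proved to the decomposition $f=(f+g)+(-g)$: since $|-g|=|g|$ gives $h_{-g}=h_g$, we get $h_f(\theta_0)\leq\max\{h_{f+g}(\theta_0),h_g(\theta_0)\}$. Because $h_f(\theta_0)>h_g(\theta_0)$, the maximum cannot be attained by $h_g(\theta_0)$, forcing $h_f(\theta_0)\leq h_{f+g}(\theta_0)$. Combining this with the upper bound $h_{f+g}(\theta_0)\leq\max\{h_f(\theta_0),h_g(\theta_0)\}=h_f(\theta_0)$ yields $h_{f+g}(\theta_0)=h_f(\theta_0)=\max\{h_f(\theta_0),h_g(\theta_0)\}$, as claimed. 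The only genuinely delicate point in the whole argument is the product case: the $\limsup$ is not additive in general, so the hypothesis that one limit exists is essential rather than cosmetic, and I would take care to state precisely where it is invoked. The sum inequality and its equality refinement are then formal consequences of subadditivity applied in two directions.
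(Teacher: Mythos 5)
Your proof is correct, but it is worth knowing that the paper itself does not prove this lemma at all: its entire ``proof'' is a citation to Levin \cite[p.51]{Levin}, where these properties of indicator functions are established. So your argument is a genuinely different route --- a self-contained derivation from the definition~(\ref{2.45}) --- and it is sound in all three parts: the product identity $\ln|fg|=\ln|f|+\ln|g|$ plus subadditivity of $\limsup$, with the hypothesis ``one limit exists'' invoked exactly where it must be (to pull the convergent term out of the $\limsup$); the triangle inequality $|f+g|\leq 2\max\{|f|,|g|\}$ with the harmless $\ln 2$ killed by $r^{\rho(r)}\to\infty$; and the equality clause via the decomposition $f=(f+g)+(-g)$ together with $h_{-g}=h_g$, which is the standard trick and is executed correctly. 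You also silently repair a typo in the statement: the $\max_\theta$ in~(\ref{2.47}) must be read as a pointwise maximum at each fixed $\theta$, which is how you use it. Two minor points you could make explicit to be fully rigorous: in the product case the identity $\ln|fg|=\ln|f|+\ln|g|$ degenerates to $-\infty$ at zeros of $f$ or $g$ on the ray, which does not affect the $\limsup$ but deserves a sentence; and the ``limit exists'' hypothesis should be read as existence of a \emph{finite} limit, since $\limsup(a_r+b_r)=\lim a_r+\limsup b_r$ can fail with infinite values. What the paper's citation buys is brevity and the authority of a reference that treats general proximate orders; what your proof buys is transparency --- in particular it exposes that~(\ref{2.46}) is genuinely conditional, not cosmetic, which the paper's one-line proof obscures.
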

\begin{proof}
 We can find
these in \cite[p.51]{Levin}. $\Box$
\end{proof}
\begin{definition}

The following quantity is called the width of the indicator
diagram of entire function $f$:
\begin{equation}\label{d}
\tilde{d}=h_f(\frac{\pi}{2})+h_f(-\frac{\pi}{2}).
\end{equation}
\end{definition}
\begin{definition}
Let $f(z)$ be an entire function of order $\rho(r)$. We use
$N(f,[\alpha,\beta],r)$ to denote the number of the zeros of
$f(x)$ inside the angle $[\alpha,\beta]$ and $|z|\leq r$; we
define the density function
\begin{equation}
\Delta_f(\alpha,\beta):=\limsup_{r\rightarrow\infty}\frac{N(f,[\alpha,\beta],r)}{r^{\rho(r)}},
\end{equation}
and
\begin{equation}
\Delta(\beta):=\Delta(\alpha_0,\beta),
\end{equation}
with fixed $\alpha_0\notin E$ with $E$ as an at most countable
set.
\end{definition}
The distribution on the zeros of an entire function is described
precisely by the following Cartwright's theorem
\cite{Cartwright,Cartwright2,Levin,Levin2}. The following
statements are from Levin \cite[ch.5, sec.4]{Levin}.
\begin{theorem}[Cartwright] \label{2.9}
If an entire function of exponential type satisfies one of the
following conditions:
\begin{equation}
\mbox{ the integral
}\int_0^\infty\frac{\ln|f(x)f(-x)|}{1+x^2}dx\mbox{ exists,\,and
}h_f(0)=h_f(\pi)=0,
\end{equation}
\begin{equation}
\mbox{ the integral
}\int_{-\infty}^\infty\frac{\ln|f(x)|}{1+x^2}dx<\infty.
\end{equation}
\begin{equation}
\mbox{ the integral
}\int_{-\infty}^\infty\frac{\ln^+|f(x)|}{1+x^2}dx\mbox{ exists}.
\end{equation}
\begin{equation}\label{2.17}
|f(x)|\mbox{ is bounded on the real axis}.
\end{equation}
\begin{equation}
|f(x)|\in\mathcal{L}^p(-\infty,\infty),
\end{equation}
then
\begin{enumerate}
    \item $f(z)$ is of class A and is of completely regular growth
    and its indicator diagram is an interval on the imaginary
    axis. In particular, for some constant $\kappa$, we have
    \begin{equation}\label{2.49}
    h_f(\theta)=\kappa\sin\theta;
    \end{equation}
\item all of the zeros of the function $f(z)$, except possibly
those of a set of zero density, lie inside arbitrarily small
angles $|\arg z|<\epsilon$ and $|\arg z-\pi|<\epsilon$, where the
density
\begin{equation} \label{249}
\Delta_i=\lim_{r\rightarrow\infty}\frac{n_i(r)}{r},\,i=1,2,
\end{equation}
of the set of zeros within each of these angles is equal to
$\frac{\tilde{d}}{2\pi}$, where $\tilde{d}$ is the width of the
indicator diagram in~(\ref{d}). Moreover, $n_i(r)$, $i=1,2$, here
is understood as the number of the zeros that fall in the wedge
$|\arg z|<\epsilon$ and $|\arg z-\pi|<\epsilon$ respectively.
Furthermore, the limit $\delta=\lim_{r\rightarrow\infty}\delta(r)$
exists, where
\begin{equation}
\delta(r):=\sum_{\{|a_k|<r\}}\frac{1}{a_k};
\end{equation}
\item moreover,
\begin{equation}\label{2.64}
\Delta_f(\epsilon,\pi-\epsilon)=\Delta_f(\pi+\epsilon,-\epsilon)=0,
\end{equation}
\item the function $f(z)$ can be represented in the form
\begin{equation}
f(z)=cz^me^{iC
z}\lim_{r\rightarrow\infty}\prod_{\{|a_k|<r\}}(1-\frac{z}{a_k}),
\end{equation}
where $c,m,B$ are constants and $C$ is real.
\end{enumerate}
\end{theorem}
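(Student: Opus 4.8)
The plan is to prove all four conclusions by the classical Levin--Pfluger route, passing through complete regular growth. The starting observation is that the five hypotheses are not really distinct: by the standard equivalences for functions of exponential type (Levin), each reduces to the single \emph{Cartwright-class} condition that the logarithmic integral $\int_{-\infty}^{\infty}\frac{\ln^{+}|f(x)|}{1+x^{2}}\,dx$ is finite. Boundedness on $\mathbb{R}$ makes $\ln^{+}|f(x)|$ bounded and hence integrable against $(1+x^{2})^{-1}$; the $\mathcal{L}^{p}$ case follows from a local Jensen estimate of $\ln^{+}|f|$ by an average of $|f|^{p}$; and the symmetric condition involving $\ln|f(x)f(-x)|$ together with $h_f(0)=h_f(\pi)=0$ reduces to the same by symmetrization. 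So in every case $f$ is entire of exponential type in the Cartwright class, and I would prove the theorem for that class.

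\textbf{Indicator and regular growth (conclusion 1).} This is the analytic heart. Since $f$ is of exponential type in the closed upper half-plane and its boundary log-modulus is Poisson integrable, $\ln|f|$ admits, for $z=x+iy$ with $y>0$, the half-plane representation
\[
\ln|f(z)|=\frac{y}{\pi}\int_{-\infty}^{\infty}\frac{\ln|f(t)|}{(t-x)^{2}+y^{2}}\,dt+\sum_{\Im a_k>0}\ln\Bigl|\frac{z-a_k}{z-\bar a_k}\Bigr|+h_f(\tfrac{\pi}{2})\,y .
\]
Letting $z=re^{i\theta}$ with $\theta\in(0,\pi)$ fixed and $r\to\infty$, the Poisson term is $o(r)$ and the Blaschke sum is $O(1)$, so $\ln|f(re^{i\theta})|=h_f(\pi/2)\,r\sin\theta+o(r)$. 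Thus in the open upper half-plane the $\limsup$ defining the indicator is a genuine limit, the growth is completely regular, and $h_f(\theta)=h_f(\pi/2)\sin\theta$; the same argument in the lower half-plane gives $h_f(\theta)=h_f(-\pi/2)\,|\sin\theta|$ for $\theta\in(-\pi,0)$. Continuity of $h_f$ then forces $h_f(0)=h_f(\pi)=0$, so the indicator diagram is the vertical segment from $-i\,h_f(-\pi/2)$ to $i\,h_f(\pi/2)$ of width $\tilde d=h_f(\pi/2)+h_f(-\pi/2)$; this is the form~(\ref{2.49}), which with Lemma~\ref{2.5} identifies the type, giving conclusion~1.

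\textbf{Zero density (conclusions 2 and 3).} From complete regular growth of order $\rho=1$ I would invoke Levin's angular density formula, valid for $\alpha,\beta$ outside the at most countable exceptional set $E$:
\[
\Delta_f(\alpha,\beta)=\frac{1}{2\pi}\Bigl(h_f'(\beta)-h_f'(\alpha)+\int_{\alpha}^{\beta}h_f(\theta)\,d\theta\Bigr).
\]
On $[\epsilon,\pi-\epsilon]$ the indicator is the smooth sinusoid $h_f(\pi/2)\sin\theta$, so $h_f'(\theta)=h_f(\pi/2)\cos\theta$ and the bracket cancels, yielding $\Delta_f(\epsilon,\pi-\epsilon)=0$; the lower half-plane computation gives $\Delta_f(\pi+\epsilon,-\epsilon)=0$, which is~(\ref{2.64}) and conclusion~3. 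At the real directions the indicator has corners, and the jump of $h_f'$ there produces $\tfrac{1}{2\pi}\bigl(h_f'(0^{+})-h_f'(0^{-})\bigr)=\tfrac{1}{2\pi}\bigl(h_f(\pi/2)+h_f(-\pi/2)\bigr)=\tilde d/2\pi$ for the angle $|\arg z|<\epsilon$, and identically $\tilde d/2\pi$ for $|\arg z-\pi|<\epsilon$; by conclusion~3 the remaining zeros form a set of density zero. This is conclusion~2, and the same regularity gives the existence of $\delta=\lim_{r\to\infty}\delta(r)$.

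\textbf{Factorization (conclusion 4) and the main obstacle.} Being of order $1$, $f$ admits a Hadamard product $f(z)=cz^{m}e^{c_1 z}\prod_k E_1(z/a_k)$ with $E_1(u)=(1-u)e^{u}$. The convergence of the symmetric sum $\delta$ just obtained lets me collect the Weierstrass exponentials, $\prod_k E_1(z/a_k)=e^{\delta z}\prod_k(1-z/a_k)$ in the principal-value sense, so $f(z)=cz^{m}e^{(c_1+\delta)z}\prod_k(1-z/a_k)$; writing $iC=c_1+\delta$, the vanishing $h_f(0)=h_f(\pi)=0$ makes the real part of the linear exponent $o(r)$ along $\mathbb{R}$, forcing $C$ real and giving the stated form. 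I expect the genuine difficulty to be concentrated in the second step: establishing the half-plane Poisson--Jensen representation for the Cartwright class and, from it, upgrading the $\limsup$ in the indicator to a true limit (complete regular growth), since this is exactly the Levin--Pfluger machinery on which the clean angular-density computation in step three entirely depends.
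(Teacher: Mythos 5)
The first thing to say is that the paper does not prove Theorem~\ref{2.9} at all: the statement is quoted verbatim from Levin \cite[ch.5, sec.4]{Levin} (with attribution to Cartwright) and is used downstream as a black box. So there is no in-paper proof to compare yours against; what you have written is a sketch of the proof that lives in the cited literature, and your architecture is indeed the standard Levin--Pfluger route found there: reduce all five hypotheses to finiteness of the logarithmic integral (the Cartwright class), establish the half-plane Poisson--Jensen representation, deduce that the indicator is sinusoidal and the growth completely regular, feed this into the angular density formula to get density $\tilde d/2\pi$ in the two horizontal angles and density zero elsewhere, and finish with the Hadamard product refined by the convergence of $\delta(r)$. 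Incidentally, your version of conclusion~1, with the two constants $h_f(\pi/2)$ and $h_f(-\pi/2)$ allowed to differ, is more precise than the paper's~(\ref{2.49}); taken literally, a single $\kappa$ with $\kappa\sin\theta$ on all of $[-\pi,\pi]$ would force the width~(\ref{d}) to vanish.

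There is, however, one genuine soft spot, which you locate correctly in your closing paragraph but then step over in the body of the argument. The claim ``the Poisson term is $o(r)$ and the Blaschke sum is $O(1)$, so $\ln|f(re^{i\theta})|=h_f(\pi/2)\,r\sin\theta+o(r)$'' is false as a pointwise statement along every ray: each Blaschke factor is negative in the upper half-plane, so the sum is bounded \emph{above} by $0$, but it is not bounded below --- along rays passing near infinitely many zeros, $\ln|f(re^{i\theta})|$ dips to $-\infty$, and the asymptotic equality can hold only for $r$ outside an exceptional $C^0$-set (a union of disks of zero relative linear density). Making that precise is exactly the content of ``completely regular growth'' and is the hard half of the Levin--Pfluger theorem; the upper bound alone yields only $h_f(\theta)\le h_f(\pi/2)\sin\theta$, the reverse inequality being recoverable more cheaply from trigonometric convexity of the indicator together with $h_f(0)=h_f(\pi)=0$. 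The issue is not cosmetic, because the angular density formula you invoke in step three is itself a theorem about functions of completely regular growth, so conclusions 2--4 all hang on precisely the step you deferred. In short: correct outline, the same route as the source the paper cites, but the central analytic lemma --- existence of the limit outside a $C^0$-set --- is asserted rather than proved.
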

Therefore, we apply this theorem to make the following conclusion.
\begin{proposition}
The indicator function of $\mathcal{D}(z)$ and $D(z)$ are equal
and
\begin{equation}\label{2.48}
h_{\mathcal{D}}(\theta)=(1+B)|\sin\theta|.
\end{equation}
\end{proposition}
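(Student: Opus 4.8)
The plan is to separate the trivial reduction from the genuine computation. First I would observe that $\mathcal{D}(k)=\frac{k^{4}}{3}D(k)$ differs from $D$ only by the monomial $\frac{k^{4}}{3}$, whose indicator is identically zero by Lemma~\ref{26}. Applying the product identity~(\ref{2.46}) of Lemma~\ref{27} (the relevant limit exists because the monomial contributes $0$) yields $h_{\mathcal{D}}(\theta)=h_{k^{4}/3}(\theta)+h_{D}(\theta)=h_{D}(\theta)$, which settles the first assertion that the two indicators coincide. It then remains to evaluate the common value from the asymptotic form~(\ref{2.5}).

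Next I would expand~(\ref{2.5}) into exponentials. Using $\cos(Bk)\tan(Bk)=\sin(Bk)$ and $\cos k\,\tan k=\sin k$, the bracketed expression becomes, up to the $O(1/k)$ factors, a finite sum of terms of the shape $k^{m}e^{\pm i(1\pm B)k}$, so that only the four exponents $\pm(1+B)$ and $\pm(1-B)$ occur. Grouping by the degree of the polynomial prefactor, the degree-two part collapses to $k^{2}\sin((1-B)k)$ and the degree-one part to $k\cos((1-B)k)$; both are functions of $(1-B)k$ alone, carry only the exponents $\pm(1-B)$, and hence have indicator $|1-B|\,|\sin\theta|$ by Lemma~\ref{26}. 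The decisive point is that the constant-order term $-\cos(Bk)\sin k$, produced by the $-1$ in the factor $k^{2}-1$, has no partner to cancel it and therefore retains the exponents $\pm(1+B)$.

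Since $B=\int_{0}^{1}[n(\rho)]^{1/2}d\rho>0$ we have $1+B>|1-B|$, so $\pm(1+B)$ are the exponents of largest modulus. I would then read off the coefficient of the dominant exponential: for $\Im k>0$ the fastest-growing exponential is $e^{-i(1+B)k}$, and a direct bookkeeping shows that its degree-two and degree-one contributions cancel, leaving the nonzero constant $\frac{1}{4i}$ (the case $\Im k<0$ is symmetric, with $e^{+i(1+B)k}$). Consequently, along any ray with $\sin\theta>0$ one has $\mathcal{D}(re^{i\theta})=\frac{1}{4i}e^{-i(1+B)k}\bigl(1+o(1)\bigr)$, whose modulus grows like $e^{(1+B)r\sin\theta}$; the polynomial prefactors and the slower $e^{\pm i(1-B)k}$ terms disappear after division by $r$ in~(\ref{2.45}). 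This gives $h_{\mathcal{D}}(\theta)=(1+B)\sin\theta$ for $\sin\theta>0$, and symmetrically $(1+B)|\sin\theta|$ for $\sin\theta<0$, while $h_{\mathcal{D}}(0)=h_{\mathcal{D}}(\pi)=0$ because the growth on the real axis is only polynomial. Equivalently, Lemma~\ref{27} identifies $h_{\mathcal{D}}$ with the maximum of the summand indicators, namely $(1+B)|\sin\theta|$, the maximum being attained strictly by the surviving $\cos(Bk)\sin k$ term.

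The main obstacle is precisely this last cancellation check: a priori the $O(1/k)$ correction factors in~(\ref{2.5}) could inject extra $e^{\pm i(1+B)k}$ contributions of polynomial order that might overwhelm or annul the constant $\frac{1}{4i}$. I expect to resolve this by noting that those corrections carry the very same exponential type $e^{\pm i(1\pm B)k}$ as the main terms but are polynomially smaller, so they cannot make the coefficient of $e^{-i(1+B)k}$ vanish identically; as long as that coefficient is not identically zero, the exponential growth rate, and thus the indicator, is unchanged. Turning this bookkeeping into a rigorous non-vanishing statement, rather than the routine exponential expansion, is the only delicate step.
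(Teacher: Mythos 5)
Your exponential bookkeeping is accurate and is in fact sharper than the paper's own proof: the paper merely applies the product-to-sum identity to $\cos(Bk)\cos k$ and then invokes Lemma~\ref{26} and Lemma~\ref{27} to declare that the remaining bracket has indicator zero, so that $h_{\mathcal{D}}$ is the larger of $|1-B||\sin\theta|$ and $(1+B)|\sin\theta|$. Your finer expansion exposes exactly what that coarse argument hides: inside the bracket the $k^{2}$- and $k$-order contributions to $e^{\pm i(1+B)k}$ cancel identically (with the $O(1/k)$ factors removed, the bracket tends to $\mp i$ off the real axis), so the whole claim hangs on a constant-order coefficient.

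However, your resolution of the obstacle you yourself flag is wrong, and this is a genuine gap --- one the paper's proof shares. The $O(1/k)$ factors multiply brackets of size $|k|^{2}$, so their contribution to the coefficient of $e^{\mp i(1+B)k}$ is $O(|k|)$: polynomially \emph{larger}, not smaller, than the constant $\frac{1}{4i}$ you keep. Equivalently, at the level of $D$ itself, the estimates~(\ref{1.32})--(\ref{1.33}) give only $D(k)=\frac{3}{k^{2}}\sin((1-B)k)+O\bigl(|k|^{-3}e^{(1+B)|\Im k|}\bigr)$, while the term $-\frac{3}{k^{4}}\cos(Bk)\sin k$ that you retain sits a full power \emph{below} that error bound. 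Consequently these asymptotics yield only the upper bound $h_{\mathcal{D}}(\theta)\leq(1+B)|\sin\theta|$; they cannot exclude that the uncontrolled remainder cancels the extreme exponentials and lowers the indicator. The danger is not hypothetical: for the comparison problem $q\equiv 0$ in~(\ref{1.30}), where $z(\xi;k)=\frac{3}{k}\xi j_{1}(k\xi)$ and $D$ can be computed exactly, the true extreme-frequency part of $D$ is $\frac{3(1-B^{2})}{2B^{2}}\frac{\sin((1+B)k)}{k^{4}}$, not the $-\frac{3}{2}\frac{\sin((1+B)k)}{k^{4}}$ that your truncation predicts --- the discarded corrections really do contribute at and above the order of your surviving term. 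A correct proof must push the asymptotics of $z(B;k)$ and $z'(B;k)$ to higher order (e.g.\ via the transformation operator) and prove that the resulting $n$-dependent coefficient of $e^{\pm i(1+B)k}$ cannot vanish. Neither your argument nor the paper's supplies this; the paper's appeal to Lemma~\ref{26} is unsound here precisely because ``bounded holomorphic coefficients'' such as $\tan k$, $\tan(Bk)$ and $1+O(1/k)$ can, and in this bracket do, conspire to cancel the top-order growth.
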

\begin{proof}
We examine~(\ref{2.5}). When $\theta\neq0,\pi$, we see that $\tan
z$ and $\cot z$ are bounded functions. Hence, we can use the
product to sum formula to see that
\begin{eqnarray}\nonumber
\mathcal{D}(k)&=&\{\frac{1}{2}\cos[(B-1)k]+\frac{1}{2}\cos[(B+1)k]\}\\
&&\times\{[k+(k^2-1)\tan k][1+O(\frac{1}{k})]+\tan(B k)[k\tan
k-k^2](1+O(\frac{1}{k}))\}.
\end{eqnarray}
Hence, using Lemma \ref{26} and Lemma \ref{27},
\begin{eqnarray}
h_{\mathcal{D}}(\theta)=\max\{|1-B||\sin\theta|,\,(1+B)|\sin\theta|\}=(1+B)|\sin\theta|,\,\theta\neq0,\pi.
\end{eqnarray}
Given $\mathcal{D}(z)$ is entire, $h(\theta)$ is a continuous
function of $\theta$. We refer this to Levin \cite[p.54]{Levin}.
Hence, the statement is proved for $\mathcal{D}(z)$ for all
$\theta\in[0,2\pi]$. Surely, $D(z)$ and $\mathcal{D}(z)$ have the
same indicator function. $\Box$
\end{proof}
Moreover, we prove the following density theorem.
\begin{theorem}\label{2.12}
The length of the indicator diagram of $D(z)$ is $2(1+B)$. The
density in each of the two small angles along real axis is
\begin{equation}
\Delta_D(-\epsilon,\epsilon)=\Delta_D(\pi-\epsilon,\pi+\epsilon)=(1+B)/\pi.
\end{equation}
\end{theorem}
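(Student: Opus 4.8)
The plan is to deduce both assertions directly from Cartwright's Theorem~\ref{2.9}, applied to $D(z)$. Two inputs are required: the analytic form of the indicator, which the preceding Proposition already supplies, namely $h_D(\theta)=(1+B)|\sin\theta|$; and one of the real-axis growth conditions of Theorem~\ref{2.9}, for which I would use the boundedness condition~(\ref{2.17}).

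I would first check the hypotheses. Since $h_D(\theta)=(1+B)|\sin\theta|$ is finite for every $\theta$ and $D$ is of order one, $D$ is of exponential type. For the boundedness on $0i+\mathbb{R}$ I would avoid the asymptotic form~(\ref{2.4}), in which $\tan z$ and $\cot z$ are unbounded on the real axis, and instead use the exact identity~(\ref{2.1}). Along the real axis the factors $z(B;k)$ and $z'(B;k)$ remain bounded by~(\ref{1.32})--(\ref{1.33}), while $j_1(k)=O(1/k)$ and $j_1'(k)=O(1/k)$; hence each of the three summands in~(\ref{2.1}) stays bounded as $|k|\to\infty$, so $|D(x)|$ is bounded on $\mathbb{R}$ and condition~(\ref{2.17}) holds.

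With the hypotheses in force I would invoke the two conclusions of Theorem~\ref{2.9}. Conclusion~(1) says the indicator diagram of $D$ is an interval on the imaginary axis, so its length equals the width~(\ref{d}); evaluating
\begin{equation}
\tilde{d}=h_D(\tfrac{\pi}{2})+h_D(-\tfrac{\pi}{2})=(1+B)+(1+B)=2(1+B)
\end{equation}
gives the first assertion. Conclusion~(2), read through~(\ref{249}), identifies the zero density in each of the angles $|\arg z|<\epsilon$ and $|\arg z-\pi|<\epsilon$ with $\tilde{d}/(2\pi)$, so
\begin{equation}
\Delta_D(-\epsilon,\epsilon)=\Delta_D(\pi-\epsilon,\pi+\epsilon)=\frac{\tilde{d}}{2\pi}=\frac{1+B}{\pi},
\end{equation}
which is the second assertion.

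The step I expect to be the main obstacle is the real-axis boundedness. The convenient closed form~(\ref{2.4}) is explicitly valid only for $k\notin 0i+\mathbb{R}$, so the bound on $\mathbb{R}$ must be drawn from the pre-asymptotic identity~(\ref{2.1}) together with the estimates~(\ref{1.32})--(\ref{1.33}); one should also note that passing between $D$ and $\mathcal{D}=k^4D/3$ only multiplies by a polynomial, which changes the zero count by a bounded amount and hence does not affect the densities, so working with whichever of the two is more convenient is harmless.
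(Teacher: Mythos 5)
Your proposal is correct and follows essentially the same route as the paper: the paper's proof is a one-line appeal to Cartwright's Theorem~\ref{2.9} (via~(\ref{249}) and the width definition~(\ref{d})) combined with the preceding proposition's computation $h_D(\theta)=(1+B)|\sin\theta|$. Your additional verification of the real-axis boundedness hypothesis~(\ref{2.17}) from~(\ref{2.1}) and~(\ref{1.32})--(\ref{1.33}) only makes explicit what the paper asserts after those estimates (``They are bounded over $0i+\mathbb{R}$''), so it is a welcome elaboration rather than a different argument.
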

\begin{proof}
This follows from Cartwright's theorem as~(\ref{249}) and
definition~(\ref{d}). $\Box$
\end{proof}

\section{Proof of Theorem \ref{111}}
Let $D^i(z)$ be the functional determinant corresponding to
refraction index $n^i(r)$, $i=1,2$. If the zeros of $D^i$ in
either wedge coincide, then Theorem \ref{2.12} tells us that
\begin{equation}
B^1=B^2,
\end{equation}
where $B^i:=\int_0^1\sqrt{n^i(\rho)}d\rho$. Let us consider
\begin{equation}  \label{3.1}
F(k):=y^1(1;k)-y^2(1;k).
\end{equation}
Let $k_j$ be a common zero of $D^1(k)$ and $D^2(k)$, then, using
the boundary condition in the third equation in
system~(\ref{1.13}),
\begin{equation}
F(k_j)=y^1(1;k_j)-y^2(1;k_j)=0.
\end{equation}
Moreover, we use~(\ref{1.32}) and~(\ref{2.39}) to obtain
\begin{equation}
h_F(\theta)\leq B^1|\sin\theta|.
\end{equation}
Therefore, its indicator diagram is
\begin{equation}\label{38}
h_{F}(-\frac{\pi}{2})+h_{F}(\frac{\pi}{2})\leq 2B^1.
\end{equation}
To consider an uniqueness problem in entire function theory, we
apply a generalized Carlson's theorem from Levin
\cite[p.190]{Levin}.
\begin{theorem}\label{C}
Let $F(k)$ be holomorphic and at most of normal type with respect
to the proximate order $\rho(r)$ in the angle $\alpha\leq\arg
k\leq\alpha+\pi/\rho$ and vanish on a set $N:=\{a_k\}$ in this
angle, with angular density $\Delta_N(\psi)$. Let
$$
H_N(\theta):=\pi\int_{\alpha}^{\alpha+\pi/\rho}\sin|\psi-\theta|d\Delta_N(\psi),
$$
when $\rho$ is integral. Then, if $F(k)$ is not identically zero,
\begin{equation}
h_F(\alpha)+h_F(\alpha+\pi/\rho)\geq
H_N(\alpha)+H_N(\alpha+\pi/\rho).
\end{equation}
\end{theorem}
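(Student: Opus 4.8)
The quantity $H_N(\theta)$ is exactly the indicator that a set of zeros with angular density $\Delta_N$ forces on any function vanishing there --- it is the indicator of the canonical product built over $N$ --- so the assertion is that the boundary values of $h_F$ at the two edges of the critical angle must at least reproduce the growth dictated by the zeros. The plan is to make this quantitative through a Jensen--Carleman accounting in the angle. Since $\rho$ is integral, the conformal map $\zeta=(ze^{-i\alpha})^{\rho}$ carries the angle $\alpha\le\arg z\le\alpha+\pi/\rho$ onto the upper half-plane, sends the two bounding rays to the positive and negative real axes, and transforms $F$ into a function $G(\zeta)$ holomorphic in $\Im\zeta\ge 0$. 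The normal-type hypothesis makes $G$ of order one and finite type with respect to the induced proximate order; the edge values $h_F(\alpha)$ and $h_F(\alpha+\pi/\rho)$ become the types of $G$ along $\arg\zeta=0$ and $\arg\zeta=\pi$, and the zero set $N$ is carried to a set in the half-plane whose angular density is the push-forward of $\Delta_N$. For the case $\rho=1$ relevant to the application the map is a mere rotation and everything lines up directly.

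First I would record Carleman's formula for $G$ on the upper half-disk of radius $R$, namely
\[
\sum_{1<s_k<R}\Bigl(\frac{1}{s_k}-\frac{s_k}{R^{2}}\Bigr)\sin\varphi_k
=\frac{1}{2\pi}\int_1^{R}\Bigl(\frac{1}{t^{2}}-\frac{1}{R^{2}}\Bigr)\log|G(t)G(-t)|\,dt
+\frac{1}{\pi R}\int_0^{\pi}\log|G(Re^{i\varphi})|\sin\varphi\,d\varphi+O(1),
\]
where $\zeta_k=s_k e^{i\varphi_k}$ runs over the zeros of $G$. The left-hand side is a weighted count of the zeros; on the right, the first term is the contribution of the diameter, i.e. of the two boundary rays of the original angle, and the second is the contribution of the large semicircle. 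Because $F$ vanishes on $N$, every image zero enters the left-hand sum with the nonnegative weight $\sin\varphi_k=\sin(\psi_k-\alpha)$, so after the normalization dictated by the proximate order and passage to the limit $R\to\infty$ the left-hand side reproduces the Stieltjes integral defining $H_N$, the factor $\pi$ in $H_N$ absorbing the normalizing constants of Carleman's formula; the limit is precisely $H_N(\alpha)+H_N(\alpha+\pi/\rho)$.

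It then remains to bound the right-hand side by the edge indicators. By the definition of the indicator, $\log|G(t)|$ and $\log|G(-t)|$ are asymptotically majorized by $h_F(\alpha)$ and $h_F(\alpha+\pi/\rho)$ times the appropriate power of $t$, so the diameter integral does not exceed $h_F(\alpha)+h_F(\alpha+\pi/\rho)$ in the limit. Provided the semicircular term can be shown to make no positive contribution along a suitable sequence $R_n\to\infty$ realizing the relevant limits --- the hypothesis $F\not\equiv 0$ ensuring $\log|G|\not\equiv-\infty$ so that Carleman's formula applies --- one obtains $h_F(\alpha)+h_F(\alpha+\pi/\rho)\ge H_N(\alpha)+H_N(\alpha+\pi/\rho)$.

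The hard part will be the limiting analysis of the right-hand side: one must control the semicircular-arc integral so that it does not reverse the inequality, and one must verify that the diameter integral is genuinely dominated by the two edge indicators rather than by some larger directional average, both under the proximate-order normalization $r^{\rho(r)}$ rather than the pure power $r^{\rho}$. Justifying the convergence of the weighted zero-sum to the Stieltjes integral against $\Delta_N$ is the other delicate point, while the conformal reduction and the bookkeeping of constants are comparatively routine. Should the arc estimate prove awkward, an alternative is to construct the canonical product $P$ over $N$ with indicator $H_N$ and apply a Phragm\'en--Lindel\"of argument to $F/P$ in the angle; but Carleman's identity is the more natural instrument because it produces directly the symmetric sum $h_F(\alpha)+h_F(\alpha+\pi/\rho)$ appearing in the conclusion.
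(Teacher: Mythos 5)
The paper contains no proof of this statement to compare yours against: Theorem~\ref{C} is quoted verbatim as a known result (a generalized Carlson theorem) from Levin \cite[p.190]{Levin} and used as a black box. So your attempt must be judged on its own merits. Your architecture --- map the angle onto the upper half-plane (a mere rotation when $\rho=1$), apply Carleman's formula to the transplanted function $G$, and play the weighted zero sum against the boundary integrals --- is indeed the classical route to exactly this kind of theorem, and it can be completed.

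However, the limiting analysis at the center of your sketch is wrong as stated, not merely unfinished. In the only interesting case, where the zeros have positive angular density and $\rho=1$, the weighted sum $\sum_{1<s_k<R}\bigl(\frac{1}{s_k}-\frac{s_k}{R^2}\bigr)\sin\varphi_k$ does \emph{not} converge to the finite quantity $H_N(\alpha)+H_N(\alpha+\pi)$: it diverges like $\bigl(\int_0^\pi\sin\varphi\,d\Delta_N\bigr)\ln R$. Likewise the diameter integral is not asymptotically dominated by $h_F(\alpha)+h_F(\alpha+\pi)$; it grows like $\frac{1}{2\pi}\bigl[h_F(\alpha)+h_F(\alpha+\pi)\bigr]\ln R$. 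The proof goes through only after dividing Carleman's identity by $\ln R$ (in general by $\int_1^R t^{\rho(t)-2}\,dt$ for a proximate order) and comparing coefficients of the divergence; this yields $\int\sin(\psi-\alpha)\,d\Delta_N(\psi)\le\frac{1}{2\pi}\bigl[h_F(\alpha)+h_F(\alpha+\pi)\bigr]$, which is precisely the assertion since $H_N(\alpha)+H_N(\alpha+\pi)=2\pi\int\sin(\psi-\alpha)\,d\Delta_N(\psi)$, the constants matching exactly. Once this normalization is in place, the issue you single out as the hard part --- the sign of the semicircular term --- evaporates: by the finite-type hypothesis that term is bounded (more generally of strictly lower order than the normalizer), so it is annihilated by the division; it never needs to be non-positive. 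What genuinely remains is the partial-summation argument converting the existence of the angular density into the stated asymptotics of the weighted zero sum, plus routine care about possible zeros on the bounding rays. In short: right tool, right skeleton, but the two convergence claims as written are false, and the logarithmic normalization is the missing idea that turns the sketch into a proof.
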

In this paper, we consider $\rho\equiv1,\,\alpha=-\frac{\pi}{2}.$
We set the collection of interior transmission eigenvalues as
\begin{equation}
N:=\{k_1,k_2,\cdots\}.
\end{equation}
\par
From Theorem \ref{2.12} and~(\ref{2.64}),
\begin{eqnarray}
\Delta_{D^i}(-\epsilon,\epsilon)=\Delta_{D^i}(\pi-\epsilon,\pi+\epsilon)=(1+B^1)/\pi;\\
\Delta_{D^i}(\epsilon,\pi-\epsilon)=\Delta_{D^i}(\pi+\epsilon,-\epsilon)=0.
\end{eqnarray}
Therefore,
\begin{equation}
H_N(\theta)=\pi\int_{-\frac{\pi}{2}}^{\frac{\pi}{2}}\sin|\psi-\theta|d\Delta_K(\psi)=(1+B^1)|\sin\theta|,
\,\theta\in[-\frac{\pi}{2},\frac{\pi}{2}].
\end{equation}
We refer to \cite[p.91;\,Ch 2. Sec\,3.]{Levin} for a complete
introduction on this set indicator function. Hence,
\begin{equation}\label{313}
H_N(-\frac{\pi}{2})+H_N(\frac{\pi}{2})=2(1+B^1).
\end{equation}
Now~(\ref{38}),~(\ref{313}) and Theorem \ref{C} imply that
\begin{equation}\label{3.12}
y^1(1;k)\equiv y^2(1;k).
\end{equation}
Similar argument can show that
\begin{equation}\label{3.13}
(y^1)'(1;k)\equiv (y^2)'(1;k).
\end{equation}

\par
We seek to identify $n(r)$ by inverse Sturm-Liouville theorem as
in \cite{Aktosun}, \cite{Mc}, \cite{Mc3,Mc2}. In particular, we
apply the methods reviewed in \cite{Mc3}.
\begin{theorem}[McLaughlin]\label{3.2}
We consider the following Sturm-Liouville problem
\begin{eqnarray}
&&\label{3.8}z''+(k^2-q)z=0,\,0<x<1;\\
&&\label{3.9}z(0)=z(1)=0,
\end{eqnarray}
where $q\in L^2(0,1)$. For another boundary condition,
\begin{equation}
\label{3.10}z(0)=z'(1)=0.
\end{equation}
Suppose $q_1,\,q_2\in L^2(0,1)$ and,
$\lambda_n(q_1)=\lambda_n(q_2)$, the eigenvalues to~(\ref{3.8})
and~(\ref{3.9}), $\mu_n(q_1)=\mu_n(q_2)$, the eigenvalues
to~(\ref{3.8}) and~(\ref{3.10}), $\forall n\in\mathbb{N}$. Then,
$q_1\equiv q_2$, a.e.
\end{theorem}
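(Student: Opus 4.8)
The plan is to recast the two-spectra hypothesis as the equality of two entire functions built from the left-normalized solution, upgrade this to equality of the full Dirichlet spectral data, and then close the argument with the Gelfand--Levitan inverse theory. First I would set $\lambda=k^2$ and let $\phi(x,\lambda)$ be the solution of $-\phi''+q\phi=\lambda\phi$ fixed by $\phi(0,\lambda)=0$, $\phi'(0,\lambda)=1$. Then the eigenvalues $\lambda_n$ of~(\ref{3.8})--(\ref{3.9}) are exactly the zeros of $\lambda\mapsto\phi(1,\lambda)$, while the eigenvalues $\mu_n$ of~(\ref{3.8}),(\ref{3.10}) are exactly the zeros of $\lambda\mapsto\phi'(1,\lambda)$. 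From the transformation-operator representation of $\phi$ one reads off the $q$-independent leading asymptotics
\[
\phi(1,\lambda)=\frac{\sin\sqrt\lambda}{\sqrt\lambda}+O\Bigl(\frac{e^{|\Im\sqrt\lambda|}}{|\lambda|}\Bigr),\qquad \phi'(1,\lambda)=\cos\sqrt\lambda+O\Bigl(\frac{e^{|\Im\sqrt\lambda|}}{|\lambda|^{1/2}}\Bigr),
\]
so both are entire of order $1/2$ in $\lambda$ with $\lambda_n=(n\pi)^2+O(1)$ and $\mu_n=((n-\tfrac12)\pi)^2+O(1)$.

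Second, since order $1/2$ forces genus $0$, Hadamard's factorization gives $\phi(1,\lambda)=c\prod_n(1-\lambda/\lambda_n)$ and $\phi'(1,\lambda)=c'\prod_n(1-\lambda/\mu_n)$, and the leading coefficients are pinned down by the universal leading terms displayed above. Writing $\phi_i$ for the solution attached to $q_i$, the coincidence of the two spectra then makes the ratios $\phi_1(1,\cdot)/\phi_2(1,\cdot)$ and $\phi_1'(1,\cdot)/\phi_2'(1,\cdot)$ entire, zero-free, of order below $1$, and bounded (tending to $1$) at infinity, hence identically $1$ by a Liouville-type argument. Thus
\[
\phi_1(1,\cdot)\equiv\phi_2(1,\cdot),\qquad \phi_1'(1,\cdot)\equiv\phi_2'(1,\cdot).
\]

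Third, I would extract the Dirichlet norming constants. Differentiating the eigenvalue equation in $\lambda$ and integrating the resulting Wronskian relation $W'=\phi^2$ with $W=\dot\phi\,\phi'-\dot\phi'\,\phi$ gives, at each Dirichlet eigenvalue,
\[
\alpha_n:=\int_0^1\phi(x,\lambda_n)^2\,dx=\dot\phi(1,\lambda_n)\,\phi'(1,\lambda_n),\qquad \dot{}\,=\partial_\lambda,
\]
where $\dot\phi(1,\lambda_n)$ is the derivative of $\phi(1,\cdot)$ at its simple zero and $\phi'(1,\lambda_n)$ is the value of the second entire function. Both factors are manufactured from $\phi(1,\cdot)$ and $\phi'(1,\cdot)$ alone, so by the previous step the norming constants agree for $q_1$ and $q_2$. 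Hence $q_1$ and $q_2$ share the complete spectral data $\{(\lambda_n,\alpha_n)\}$ of the Dirichlet problem, and the Gelfand--Levitan uniqueness theorem (the inverse theory reviewed in \cite{Mc3}) yields $q_1\equiv q_2$ a.e.

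The factorization and the Wronskian bookkeeping are routine; the genuine difficulty, and the step I expect to be the main obstacle, is this final inversion. Passing from coincidence of spectral data to equality of the potentials is the substance of the Gelfand--Levitan (equivalently Marchenko) machinery: one must show that the spectral function determines the kernel of the transformation operator through its integral equation, which requires completeness of the eigenfunction system and careful control of the series defining the spectral measure. In a self-contained write-up one could equally route the argument through the Weyl $m$-function $m_i=\phi_i'(1,\cdot)/\phi_i(1,\cdot)$: equality of the two entire functions gives $m_1\equiv m_2$, after which Marchenko's theorem that the $m$-function determines $q$ closes the proof.
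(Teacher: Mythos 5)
The paper does not actually prove this statement: Theorem~\ref{3.2} is imported verbatim as a known result (it is in substance Borg's classical two-spectrum uniqueness theorem), with its proof deferred to the cited literature \cite{Mc,Mc3}. So there is no internal argument to compare yours against; what you have written is a proof of the paper's black box itself, and it is essentially correct. Your route is the standard modern one, and in fact it is the route of P\"oschel--Trubowitz \cite{Po}, which this paper also cites: the two spectra are exactly the zero sets of the entire functions $\phi(1,\cdot)$ and $\phi'(1,\cdot)$; Hadamard factorization of these order-$1/2$ functions, with the multiplicative constant pinned down by the $q$-independent asymptotics (most cleanly along $\lambda\to-\infty$, where $\phi(1,\lambda)\sim\sinh\sqrt{|\lambda|}/\sqrt{|\lambda|}$), upgrades equality of spectra to identity of the functions; your Wronskian identity is right ($W=\dot\phi\,\phi'-\dot\phi'\,\phi$ satisfies $W'=\phi^2$ and $W(0)=0$, so $\alpha_n=\dot\phi(1,\lambda_n)\,\phi'(1,\lambda_n)$), and as a free byproduct it gives simplicity of the Dirichlet zeros, since $\alpha_n>0$ forces $\dot\phi(1,\lambda_n)\neq0$ --- a point you need anyway for the ratio $\phi_1(1,\cdot)/\phi_2(1,\cdot)$ to be entire; then Gelfand--Levitan uniqueness for the data $\{(\lambda_n,\alpha_n)\}$ finishes. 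Two small repairs: the factorizations should carry a factor $\lambda^m$ in case $0$ is an eigenvalue (the hypothesis does not exclude it), and the zero-free ratio of order $<1$ is $e^{g}$ with $g$ necessarily constant, after which the normalization at infinity forces the constant to be $1$ --- say it this way rather than ``Liouville,'' since the ratio itself need not be bounded before you know $g$ is constant. With those details, your argument is a complete and correct reduction of the stated theorem to the Gelfand--Levitan (or, via the Weyl function, Marchenko) uniqueness theorem, which is exactly the status it has in the sources the paper cites.
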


Under the Liouville transform~(\ref{1.26}), the zeros of $z(B;k)$
exactly corresponds to the eigenvalues of the Sturm-Liouville
problem
\begin{eqnarray} \label{3.15}
\left\{%
\begin{array}{ll}
    z''+[k^2-p(\xi)]z=0,\,0<\xi<B; \\
    z(0)=0;\,z(B)=0. \\
\end{array}%
\right.
\end{eqnarray}
Similarly, the zeros of $z'(B;k)$ exactly corresponds to the
eigenvalues of the Sturm-Liouville problem
\begin{eqnarray}\label{3.16}
\left\{%
\begin{array}{ll}
    z''+[k^2-p(\xi)]z=0,\,0<\xi<B;\\
    z(0)=0;\,z'(B)=0. \\
\end{array}%
\right.
\end{eqnarray}
Now $z^i(B;k)$ and $(z^i)'(B;k)$  corresponding to refraction
index $n^i(r)$ have common zeros by~(\ref{3.12}) and~(\ref{3.13}).
Hence, the Sturm-Liouville problems
\begin{eqnarray} \label{3.15}
\left\{%
\begin{array}{ll}
    (z^i)''+[k^2-p^i(\xi)]z^i=0,\,0<\xi<B; \\
    z^i(0)=0;\,z^i(B)=0, \\
\end{array}%
\right.
\end{eqnarray}
have the same eigenvalues for both $i=1,2$. Similarly,
\begin{eqnarray}\label{3.16}
\left\{%
\begin{array}{ll}
    (z^i)''+[k^2-p^i(\xi)]z^i=0,\,0<\xi<B;\\
    z^i(0)=0;\,(z^i)'(B)=0. \\
\end{array}%
\right.
\end{eqnarray}
have the same eigenvalues for both $i=1,2$. Given $p^i\in
\mathcal{L}^2$, $i=1,2$, Theorem \ref{3.2} says these imply
$p^1\equiv p^2$ almost everywhere. However,~(\ref{1.1})
and~(\ref{1.29}) implies $p^1\equiv p^2$. The solution is unique,
so we have
\begin{equation}
z^1(\xi;k)\equiv z^2(\xi;k),\,\forall\xi,k.
\end{equation}
This says that
\begin{equation}
[n^1(r)]^{\frac{1}{4}}y^1(r;k)\equiv
[n^2(r)]^{\frac{1}{4}}y^2(r;k),\,\forall r,k.
\end{equation}
Since $n^i$ never vanishes, the solutions $y^1(r;k),\,y^2(r;k)$
have common zero set in $\mathbb{C}$ of its maximal density as
described by Cartwright's theory for any fixed $r$.
From~(\ref{1.32}) and~(\ref{2.39}), their common density is
$\frac{\int_0^r\sqrt{n^i(\rho)}d\rho}{\pi}$. Hence,
\begin{equation}
\int_0^r[n^1(\rho)]^{\frac{1}{2}}d\rho=\int_0^r[n^2(\rho)]^{\frac{1}{2}}d\rho,\,\forall
r.
\end{equation}
This implies that $n^1\equiv n^2$. $\Box$

\end{document}